\theoremstyle{plain}
\newtheorem{theorem}{Theorem}[section]
\newtheorem*{theorem*}{Theorem}
\newtheorem{lemma}[theorem]{Lemma}
\newtheorem{corollary}[theorem]{Corollary}
\theoremstyle{definition}
\newtheorem{definition}[theorem]{Definition}
\newcommand{\Z}{\mathbb{Z}}
\newcommand{\R}{\mathbb{R}}
\newcommand{\g}{\gamma}
\renewcommand{\H}{{\mathbb{H}}}
\renewcommand{\t}{\tau}
\renewcommand{\a}{\alpha}
\renewcommand{\b}{\beta}
\newcommand{\first}{\g(\a^m,\b)}
\newcommand{\second}{\g(\a_1^m,\b_1)}
\newcommand{\la}{\langle}
\newcommand{\ra}{\rangle}
\theoremstyle{plain} 
\newcommand{\thistheoremname}{}
\newtheorem*{genericthm*}{\thistheoremname}
\newenvironment{style}[1]
  {\renewcommand{\thistheoremname}{#1}%
   \begin{genericthm*}}
  {\end{genericthm*}}
\begin{document}
\title[Center of Goldman Algebra]{Center of the Goldman Algebra}

\author{Arpan Kabiraj}

\address{	Department of Mathematics\\ 
		Indian Institute of Science\\
		Bangalore 560012, India}

\email{arpan.into@math.iisc.ernet.in}

\thanks{The author was supported by NET-CSIR (India) Senior Research Fellowship.}

\begin{abstract}
We show that the center of the Goldman algebra associated to a closed oriented hyperbolic surface is trivial. For a hyperbolic surface of finite type with nonempty boundary, the center consists of closed curves which are homotopic to boundary components or punctures. 
\end{abstract}
\maketitle
\section{Introduction}
Given an oriented surface $F$ and two oriented closed curves $\a$ and $\b$ intersecting transversely in double points, Goldman \cite{Gol} defined a bracket operation between $\a$ and $\b$ by $$[\a,\b]=\sum_{p\in\a\cap\b}\epsilon(p)(\a*_p\b)$$ where $\epsilon(p)$ denotes the sign of the intersection between $\a$ and $\b$ at $p$ and $(\a*_p\b)$ denotes the loop product of $\a$ and $\b$ at $p$.

In the paper \cite{Gol}, Goldman showed that the bracket operation is well defined on $\mathcal{C}$, the set of all free homotopy classes of oriented closed curves in $F$. Hence it can be extended linearly to $\Z(\mathcal{C})$, the free module generated by $\mathcal{C}$. He also proved that the bracket is skew-symmetric and satisfies Jacobi identity. Therefore the bracket operation is a Lie bracket and it gives a Lie algebra structure on $\Z(\mathcal{C})$ which we denote by $\mathcal{L}(F)$. The main object of this paper is to study the center of $\mathcal{L}(F)$.

Using algebraic tools Etingof \cite[Theorem 1.2]{E} proved the following conjecture of M. Chas and D. Sullivan. We give an alternative proof of this result using hyperbolic geometry.
\begin{style}{Theorem 1}
If $F$ is a closed hyperbolic surface then the center of $\mathcal{L}(F)$ is trivial. 
\end{style} 

It is natural to conjecture \cite[Open Problem 1]{Chas} that for a surface $F$ with non-empty boundary the center of $\mathcal{L}(F)$ is generated by the free homotopy classes of oriented closed curves which are either homotopic to boundary or homotopic to puncture. In this paper we prove this conjecture.

\begin{style}{Theorem 2} If $F$ is a hyperbolic surface of finite type with nonepmty geodesic  boundary then the center of $\mathcal{L}(F)$ is generated by the set of all free homotopy classes of oriented closed curves which are either homotopic to a boundary component or homotopic to a puncture.  
\end{style}

Goldman discovered this bracket operation while studying the Weil-Petersson symplectic form on Teichm{\"u}ller space. Using Wolpert's \cite{W} result on length and twist flow he showed that if Goldman bracket between two closed curve is zero and one of them has a simple representative then their geometric intersection number is zero. Later the combinatorial structure of $\mathcal{L}(F)$ has been studied. Using combinatorial topology, Chas \cite{Ch1}  proved a stronger version of Goldman's result namely if one of the curves has a simple representative then the number of terms in the Goldman bracket is same as their geometric intersection number. Chas and Krongold \cite{CK} proved that for a compact surface with non-empty boundary, $[\a,\a^3]$ determines the self-intersection number of $\a$. Using hyperbolic geometry, Chas and Gadgil \cite{GC} proved that there exists a positive integer $m_0$ such that for all $m\geq m_0$, the geometric intersection number between $\a$ and $\b$ is the number of terms in $[\a^m,\b]$ divided by $m$. There is a Lie cobracket defined by Turaev \cite{T} on $\Z(\mathcal{C})$ which is the dual object of Goldman bracket. This structure has been studied in \cite{CK0}, \cite{Ch}.
\vspace{3mm}

\noindent\textbf{Idea of the proof:}\hspace{1mm} Given an oriented surface of negative Euler characteristic, we fix a hyperbolic metric on it with geodesic boundary. Given two closed oriented curves $\a$ and $\b$ intersecting transversally we construct lifts of $(\a*_p\b)$ in $\H$ for each intersection point $p$. We show that the lifts are quasi-geodesics. Hence they are homotopic to unique geodesics. Therefore if two terms $(\a*_p\b)$ and $(\a*_q\b)$ cancel each other then corresponding geodesics will be same. 

Taking sufficiently higher power $m$ of $\a$ we ensure that if the geodesics are same then the quasi-geodesics are also same and hence the terms have same sign. Therefore there is no cancellation between the terms of $[\a^m,\b]$.

 Now if an element $\b=\sum_{i=1}^k\b_i$ of $\mathcal{L}(F)$ belongs to the center then we consider a simple closed curve $\a$ which intersects atleast one of the curves $\b_i$ non-trivially. Taking sufficiently higher power of $\a$ we ensure that the same terms of $[\a^m,\b_i]$ has the same sign. Also we show that if one term of $[\a^m,\b_i]$ and another term of $[\a^m,\b_j]$ are same then $\b_i$ and $\b_j$ are conjugates. 
 
 Therefore if $[\g,\b]$ is zero for all closed curve $\g$ then each $\b_i$ is disjoint from every simple closed curve and hence each $\b_i$ is either homotopic to a point or a boundary component or a puncture. A crucial fact we use in our proof is that the lifts of a simple closed geodesic in $\H$ are disjoint. 
\vspace{3mm}

\noindent\textbf{Organization of the paper:} In section 2 we recall some basic facts about hyperbolic geometry and hyperbolic surfaces. We also define the intersection number and loop product between two curves intersecting transversally (not necessarily in double points) and give an algebraic description of the geometric intersection number. In section 3 we mention two well known results about the fundamental group of a hyperbolic surface. In section 4  we recall the definition and some properties of Goldman bracket. In section 5 we construct the lifts of the terms in Goldman bracket and show that they are quasi-geodesics. In section 6 we prove the non-cancellation lemma which is the main lemma of this paper. In section 7 we prove the main theorems.

\noindent\textbf{Acknowledgements:} The author would like to thank Siddhartha Gadgil for his encouragement and enlightening conversations. The author would also like to thank Bidyut Sanki and Divakaran D. for all the discussions which helped him to understand the problem better.         

\section{Basics of hyperbolic geometry and hyperbolic surfaces}
In this section we recall some basic facts about hyperbolic geometry and hyperbolic surfaces. References for the results mentioned in this section are \cite{B}, \cite{Ka}, \cite{R}.

Let $F$ be a hyperbolic surface of finite type, i.e. $F$ is a surface of genus $g$ with $b$ boundary components and $n$ punctures such that, $2-2g-b-n< 0.$ Let $G=\pi_1(F)$ be the fundamental group of $F$. We identify $G$ with a discrete subgroup of $PSL_2(\R)$, the group of orientation preserving isometries of $\H$. The action of $G$ on $\H$ is properly discontinuous and without any fixed point. Therefore the quotient space is isometric to $F.$ Henceforth by an isometry of $\H$ we mean an orientation preserving isometry and by a closed curve we mean an oriented close curve.

A homotopically non-trivial closed curve in $F$ is called \emph{essential} if it is not homotopic to a puncture. By a \emph{lift} of a closed curve $\g$ to $\H$ we mean the image of a lift $\R\rightarrow \H$ of the map $\g\circ\pi$ where $\pi:\R\rightarrow S^1$ is the usual covering map. 

Every isometry $f$ of $\H$ belongs to one of the following classes:\\
1) \emph{Elliptic}: $f$ has a unique fixed point in $\H$.\\
2) \emph{Parabolic}: $f$ has a unique fixed point in $\partial\H.$\\
3) \emph{Hyperbolic}: $f$ has exactly two fixed points in $\partial\H.$ The geodesic joining these two fixed points is called the \emph{axis} of $f$ and is denoted by $A_f$. The isometry $f$ acts on $A_f$ by translation by a fixed positive number, called the \emph{translation number} of $f$ which we denote by $\t_f.$  

Since $G$ acts on $\H$ without any fixed point, $G$ does not contain any elliptic element. Essential closed curves in $G$ correspond to hyperbolic isometries and closed curves homotopic to punctures correspond to parabolic isometries.

Let $\a,\b\in G$ be two hyperbolic elements whose axes intersect at a point $P$ (Figure \ref{axis}). By \cite[Theorem 7.38.6]{B}, $\a\b$ is also hyperbolic. Let $Q$ be the point in $A_{\a}$ at a distance $\tau_{\a}/2$ from $P$ in the positive direction of $A_{\a}$ and $R$ be the point in $A_{\b}$ at a distance $\tau_{\b}/2$ from $P$ in the negative direction of $A_{\b}.$ Then the unique geodesic joining $R$ and $Q$ with orientation  from $R$ to $Q$ is the axis of $\a\b$ and the  distance between $Q$ and $R$ is $\tau_{\a\b}/2.$  

\begin{figure}[h]
  \centering
    \includegraphics[trim = 65mm 40mm 10mm 30mm, clip, width=15cm]{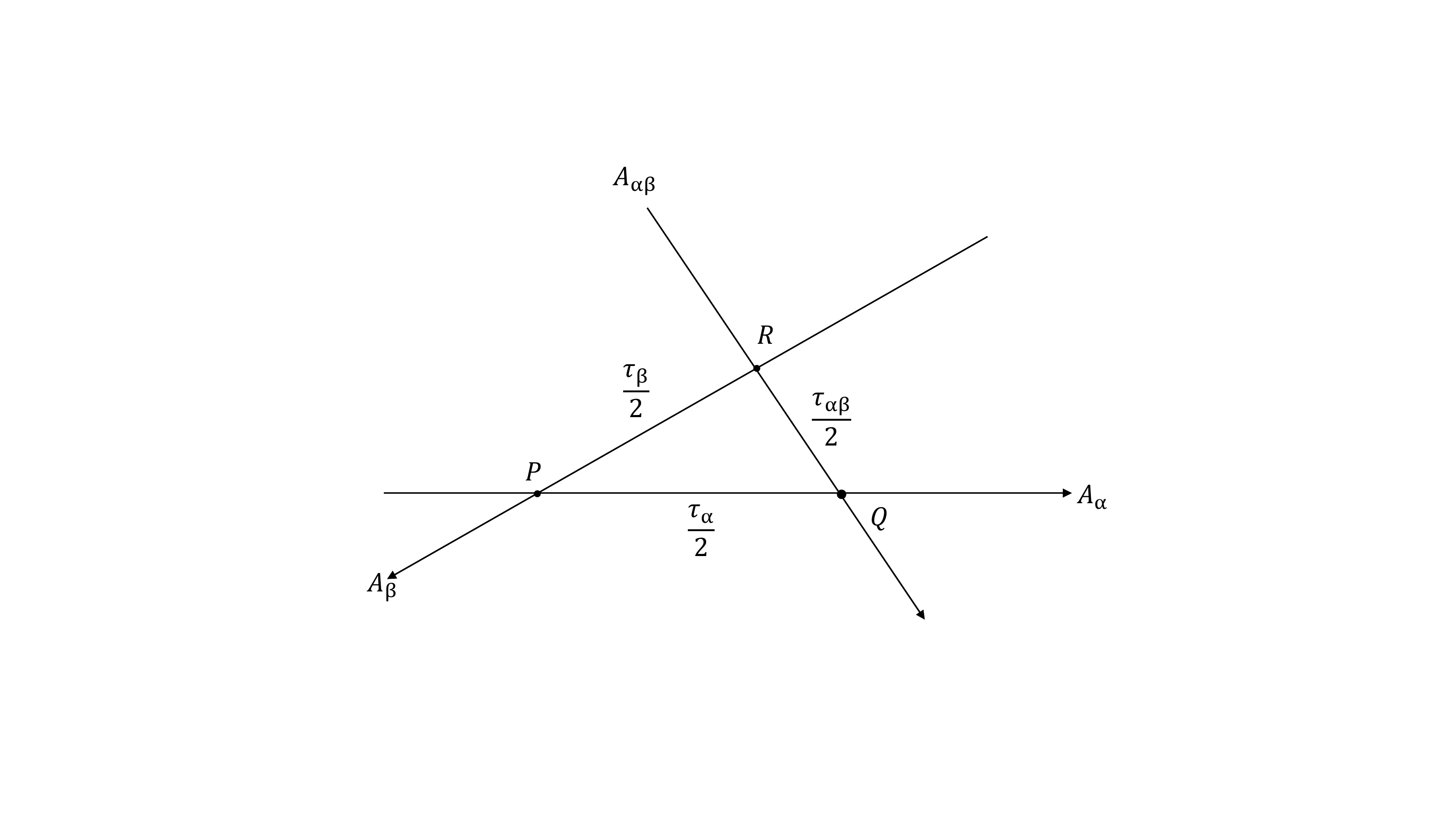}
     \caption{Axis of $\a\b$.}\label{axis}
\end{figure} 

There is a bijective correspondence between non-trivial conjugacy classes in $G$ and non-trivial free homotopy classes of closed curves in $F.$ Given a closed curve $\g$, we denote its free homotopy class by $\widetilde{\g}$ and we identify it by the corresponding conjugacy class in $G$ which we denote by $\la\g\ra$. Abusing notation we sometime denote the conjugacy class of $\g$ by $\g$ itself. Given $a,g\in G$, we denote $gag^{-1}$ by $a^g$. If $a$ is hyperbolic then $a^g$ is also hyperbolic with $\t_{a^{g}}=\t_a$ and $A_{a^{g}}=gA_a$ for all $g\in G$.  

Let $S^1=\{e^{2\pi i\theta}:\theta\in[0,1]\}$. We consider $S^1$ with counter-clockwise orientation. Let $\a:S^1\rightarrow F$ and $\b:S^1\rightarrow F$ be two closed curves. Given $\theta_1,\theta_2\in[0,1]$, we say $p(\theta_1,\theta_2)\in F$ is an \emph{intersection point} between $\a$ and $\b$ if $p(\theta_1,\theta_2)=\a(e^{2\pi i\theta_1})=\b(e^{2\pi\i\theta_2})$. We sometime omit $\theta_1$ and $\theta_2$ from the notation of intersection points. Denote the set of all intersection points between $\a$  and $\b$ by $\a\cap\b$.

Let $p=p(\theta_1,\theta_2)$ be an intersection point between $\a$ and $\b$. The \emph{loop product between $\a$  and $\b$ at $p$}, denoted by $(\a*_p\b)$ is defined to be 
$$(\a*_p\b)(e^{2\pi i\theta})=
\begin{cases}
\a(e^{2\pi i(\theta_1+2\theta)}) & \text{if } \theta\in[0,\frac{1}{2}]\\
\b(e^{2\pi(\theta_2+(2\theta-1))}) & \text{if } \theta\in[\frac{1}{2},1]. 
\end{cases}$$

The \emph{geometric intersection number} between free homotopy classes of closed curves $\a$ and $\b$, denoted by $i(\a, \b)$, is defined to be the minimal number of intersection points between a representative curve in the class $\widetilde{\a}$ and a representative curve in the class $\widetilde{\b}$ which intersects transversally.

Every free homotopy class of essential closed curve contains a unique closed geodesic whose length is same as the translation length of any element of the corresponding conjugacy class. By a slight abuse of notation we denote the free homotopy classes of essential closed curves by their geodesic representatives. Let $\la\a\ra$ and $\la\b\ra$ be two free homotopy classes of essential closed curves with geodesic representatives $\a$ and $\b$ respectively. Suppose they intersects each other. Then we can determine the geometric intersection number between $\la\a\ra$ and $\la\b\ra$ in the following way.

\begin{figure}[h]
  \centering
    \includegraphics[trim = 50mm 25mm 10mm 60mm, clip, width=15cm]{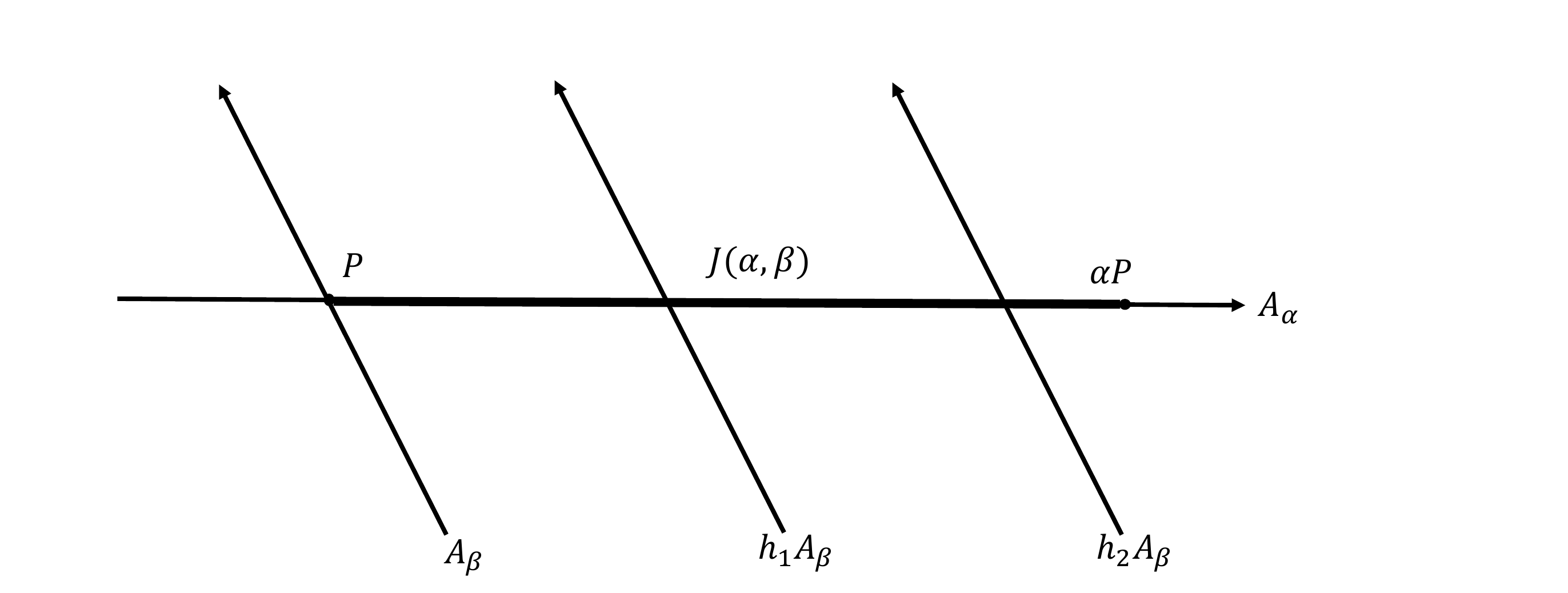}
     \caption{Lift of intersection points.}\label{intersection}
\end{figure}

Let $P$ be the intersection point between $A_{\a}$ and $A_{\b}$ where $A_{\a}$ and $A_{\b}$ are the axes of the hyperbolic transformations corresponding to  $\a$ and $\b$ respectively. Let $I(\a ,\b)$ be the segment of $A_{\a}$ of length $\t_{\a}$ starting from $P$ in the positive direction of $A_{\a},$ containing $P$ but not containing $\a P$ (Figure \ref{intersection}). For every intersection point $q\in\a\cap\b$, there exists a unique translate $gA_{\b}$ of $A_{\b}$ such that the intersection of $gA_{\b}$ with $I(\a,\b)$ is a lift $Q$ of $q$ and the angle between $A_{\a}$ and $gA_{\b}$ at $Q$ in their positive direction is same as the angle at $q$ between the arcs $(\a*_q\b)|_{[0,\frac{1}{4}]}$ and $(\a*_q\b)|_{[\frac{1}{2},\frac{3}{4}]}$. If $B$ denotes the cyclic subgroup generated by $\b$, define $J(\a, \b)=\{gB\in G/B : I(\a,\b)\cap gA_{\b}\neq \emptyset\}.$ Then cardinality of $J(\a ,\b)$ is exactly $i(\a ,\b)$.

\section{Definition and results about $\delta$-closeness}
\begin{definition} Suppose $A_1,A_2$ are two oriented geodesics in $\H$ containing points $p_1$ and $p_2$ respectively. Let $\Gamma$ be the oriented geodesic from $p_1$ to $p_2$ and $\theta_i$ be the angle between $A_i$ and $\Gamma$ at $p_i$ in the positive direction of both axes for $i\in\{1,2\}$. Given $\delta>0$, we say that $A_1$ and $A_2$ are \emph{$\delta$-close at $p_1$ and $p_2$} if $d(p_1,p_2)<\delta$ and $|\theta_1 -\theta_2|<\delta.$ 

We say that $A_1$ and $A_2$ are \emph{$\delta$-close} if there exist points $p_1$ and $p_2$ in $A_1$ and $A_2$ respectively, such that $A_1$ and $A_2$ are $\delta$-close at $p_1$ and $p_2$. 

The following lemma is a well known result.
\end{definition} 
\begin{lemma}\label{close}
For each $L>0$, there exists $\delta>0$ such that if $\a$ and $\b$ are hyperbolic elements in $G$ with $\tau_{\a}\leq L$, $\tau_{\b}\leq L$ and $A_{\a}$,$A_{\b}$ are $\delta$ close then the geodesics $A_{\a}$ and $A_{\b}$ coincide. 
\end{lemma}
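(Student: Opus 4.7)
The plan is a proof by contradiction. I would assume the conclusion fails for some $L > 0$: for each $n \ge 1$ there exist hyperbolic $\a_n, \b_n \in G$ with $\tau_{\a_n}, \tau_{\b_n} \le L$, $A_{\a_n} \ne A_{\b_n}$, and points $p_n \in A_{\a_n}$, $q_n \in A_{\b_n}$ witnessing $(1/n)$-closeness of the two axes. The strategy is to extract a subsequential limit using finiteness of short closed geodesics together with proper discontinuity of the $G$-action on $\H$, and to show that the limiting axes must coincide.

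Since $F$ is of finite type, it carries only finitely many closed geodesics of length $\le L$, and so only finitely many $G$-conjugacy classes of hyperbolic elements have translation length $\le L$. Passing to a subsequence and simultaneously conjugating the tuple $(\a_n, \b_n, p_n, q_n)$ by a suitable $g_n \in G$, I may assume $\a_n = \a$ is one fixed element with axis $A := A_{\a}$. The stabilizer of $A$ in $G$ is cyclic, generated by a primitive $\a_0$ with $\a \in \langle \a_0 \rangle$, and acts on $A$ by translation of length $\tau_{\a_0}$. Replacing each $\b_n$ by $\a_0^{m_n} \b_n \a_0^{-m_n}$ and $(p_n, q_n)$ by $(\a_0^{m_n} p_n, \a_0^{m_n} q_n)$ for suitable $m_n$, I may further arrange that $p_n$ lies in a fixed compact fundamental segment $I \subset A$ (this conjugation leaves $\a$ unchanged since $\a_0$ commutes with $\a$). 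Then $d(p_n, q_n) < 1/n$ forces $q_n$ into a fixed compact neighborhood $K$ of $I$, and $d(q_n, \b_n q_n) = \tau_{\b_n} \le L$ forces $\b_n q_n$ into a further compact set $K'$. By proper discontinuity, $\{g \in G : gK \cap K' \ne \emptyset\}$ is finite, so a subsequence makes $\b_n = \b$ constant, with $A_{\b} \ne A$.

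For the contradiction, I would pass to one more subsequence so that $p_n \to p \in I \subset A$ and $q_n \to q \in A_{\b}$; since $d(p_n, q_n) \to 0$, $p = q$, giving an intersection point of $A$ and $A_{\b}$ in $\H$. Two hyperbolic elements of a discrete subgroup of $PSL_2(\R)$ that share one fixed point on $\partial \H$ must share the other as well, so the distinct axes $A$ and $A_{\b}$ are not asymptotic and must cross transversely at $p$ at some positive angle $\theta_0$. A local computation in $T_p \H$ via the exponential map then shows that for $p_n, q_n$ sufficiently close to $p$, the signed angles $\theta_1, \theta_2$ that the connecting geodesic $\Gamma_n$ makes with $A$ at $p_n$ and with $A_{\b}$ at $q_n$ satisfy $\theta_1 - \theta_2 \equiv \theta_0 \pmod{2\pi}$, so $|\theta_1 - \theta_2|$ stays bounded away from $0$, contradicting $(1/n)$-closeness for large $n$. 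The main obstacle is this last angle estimate: one must unpack the signed-angle convention in the definition of $\delta$-closeness and verify that the limiting discrepancy is genuinely $\theta_0$ rather than zero. I expect it to reduce to a short trigonometric calculation in the Euclidean tangent space at $p$ after aligning coordinates with the two axes.
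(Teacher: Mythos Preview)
Your approach is sound and is likely close in spirit to what the cited reference does; note, however, that the paper does not give its own proof here but simply refers to \cite[Lemma~6.2]{GC}, so there is no detailed argument in the paper to compare against. The compactness reduction you outline---finiteness of conjugacy classes with $\tau \le L$ on a finite-type surface, conjugation to bring $p_n$ into a fixed fundamental segment of $A_\a$, and proper discontinuity to pin down $\b_n = \b$---is the natural route and goes through cleanly.

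Your caution about the final angle estimate is well placed, and in fact the issue is sharper than you indicate. If the angles $\theta_1, \theta_2$ in the definition of $\delta$-closeness are read as \emph{unsigned} angles in $[0,\pi]$ between the positive rays of $A_i$ and $\Gamma$, then the conclusion $|\theta_1 - \theta_2| \to \theta_0$ can fail: in the Euclidean model of $T_p\H$, placing $p_n$ and $q_n$ at equal parameter on opposite sides of the intersection point (so that the segment $\Gamma_n$ bisects the exterior angle between the two axes) gives $\theta_1 = \theta_2 = \pi - \theta_0/2$ exactly, while $d(p_n,q_n)\to 0$. This would contradict the lemma itself, so the statement is only correct under a \emph{signed} convention (angles in $[0,2\pi)$, measured consistently from the positive direction of $A_i$ to that of $\Gamma$). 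Under that reading the Euclidean computation gives $\theta_1 - \theta_2 \equiv \theta_0 \pmod{2\pi}$ independently of the direction of $\Gamma_n$, and the hyperbolic version follows because parallel transport along the short segment $\Gamma_n$ is close to the identity. So when you ``unpack the signed-angle convention'' you should not merely verify the estimate but observe that the signed reading is forced for the lemma to hold at all.
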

\begin{proof} See \cite[Lemma 6.2]{GC}.   
\end{proof}
\begin{corollary}\label{nbd}
Given two nonzero positive number $L$ and $C$ there exists a constant $M>0$ such that for every pair of hyperbolic elements $\a$ and $\b$ in $G$ with $\t_{\a}\leq L$ and $\t_{\b}\leq L$, the set $$\{x\in A_{\a} : d(x,A_{\b})<C\}$$ is either empty or a geodesic segment of length at most $M$. 
\end{corollary}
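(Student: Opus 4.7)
The plan is to apply Lemma~\ref{close} to rule out configurations in which $A_{\a}$ and $A_{\b}$ are geometrically too close to coinciding, and then use hyperbolic trigonometry to bound the length of $S := \{x \in A_{\a} : d(x, A_{\b}) < C\}$. First I would fix $\delta > 0$ by applying Lemma~\ref{close} to the given $L$. The set $S$ is a convex subinterval of $A_{\a}$, since $x \mapsto d(x, A_{\b})$ is a convex function along any geodesic. The conclusion of finite length at most $M$ implicitly excludes $A_{\a} = A_{\b}$, and since $G$ is a Fuchsian group of finite type, two distinct axes cannot share a single endpoint on $\partial\H$; hence $A_{\a}$ and $A_{\b}$ either admit a unique common perpendicular (ultraparallel case) or intersect transversely at a single point (transverse case).

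In the ultraparallel case, let $d_0 > 0$ denote the length of the common perpendicular, with feet $F \in A_{\a}$ and $F' \in A_{\b}$. Since the common perpendicular is orthogonal to both axes, the angles in the definition of $\delta$-closeness at $(F,F')$ are both $\pi/2$; hence the pair $(F,F')$ witnesses $\delta$-closeness precisely when $d_0 < \delta$, which by Lemma~\ref{close} would force $A_{\a} = A_{\b}$. So $d_0 \geq \delta$, and the standard formula $\sinh d(x(s), A_{\b}) = \sinh d_0 \cdot \cosh s$ (with $s$ the signed arc length along $A_{\a}$ from $F$) shows that $d(x(s), A_{\b}) < C$ forces $\cosh s < \sinh C / \sinh d_0 \leq \sinh C / \sinh \delta$. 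This bounds the length of $S$ by $M_1 := 2 \operatorname{arccosh}(\sinh C / \sinh \delta)$, understood as $0$ when $\delta \geq C$ (in which case $S$ is already empty).

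In the transverse case, let $O$ be the intersection point and $\theta \in (0,\pi)$ the intersection angle. Taking $p_1 \in A_{\a}$ at arc length $s$ from $O$ and $p_2 = O \in A_{\b}$, the connecting geodesic $\Gamma$ in the definition of $\delta$-closeness lies along $A_{\a}$, so the angle $\theta_1$ at $p_1$ between $A_{\a}$ and $\Gamma$ is $0$ up to orientation, while $\theta_2$ at $O$ equals $\theta$. Hence if $\theta$ (or its supplement, according to the orientations of $A_{\a}$ and $A_{\b}$) were less than $\delta$, choosing $s < \delta$ would produce $\delta$-closeness and, by Lemma~\ref{close}, force $A_{\a} = A_{\b}$, contradicting transverse intersection. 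So $\sin \theta \geq \sin \delta$, and the hyperbolic right-triangle identity $\sinh d(x(s), A_{\b}) = \sinh s \cdot \sin \theta$ then bounds the length of $S$ by $M_2 := 2 \operatorname{arcsinh}(\sinh C / \sin \delta)$.

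Setting $M := \max(M_1, M_2)$ completes the argument. The main technical point is the choice of the pair $(p_1, p_2)$ in each case so that Lemma~\ref{close} yields exactly the lower bound on $d_0$ or on $\sin \theta$ that the hyperbolic identities then exploit; once those bounds are in hand, the trigonometric estimates are routine.
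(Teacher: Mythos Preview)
The paper does not give its own proof of this corollary; it simply cites \cite[Corollary~6.3]{GC}. Your argument is therefore a self-contained replacement rather than a comparison, and it is essentially correct and is presumably close in spirit to what \cite{GC} does: use Lemma~\ref{close} to bound from below either the common-perpendicular length $d_0$ (ultraparallel case) or $\sin\theta$ (transverse case), and then feed that bound into the standard distance formulas $\sinh d(x(s),A_\b)=\sinh d_0\cdot\cosh s$ and $\sinh d(x(s),A_\b)=\sin\theta\cdot\sinh|s|$.

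One point worth tightening in the transverse case: with $p_1$ on $A_\a$ near $O$ and $p_2=O$, the quantity $|\theta_1-\theta_2|$ equals the oriented angle $\phi$ between the positive directions of $A_\a$ and $A_\b$, and this alone does not control $\pi-\phi$. Your parenthetical ``or its supplement, according to the orientations'' is the right instinct; to make it rigorous, note that if $\pi-\phi<\delta$ you can apply Lemma~\ref{close} to the pair $(\a,\b^{-1})$, since $A_{\b^{-1}}=A_\b$ as a set, the oriented angle becomes $\pi-\phi$, and $\t_{\b^{-1}}=\t_\b\leq L$. Either way one obtains $\min(\phi,\pi-\phi)\geq\delta$, hence $\sin\phi\geq\sin\delta$, which is what the trigonometric estimate needs. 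Your remark that the stated conclusion tacitly excludes $A_\a=A_\b$ is also correct and is consistent with how the corollary is invoked later in Lemma~\ref{nonca}.
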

\begin{proof} See \cite[Corollary 6.3]{GC}.
\end{proof}
\section{Goldman bracket: definition and properties}  
\begin{definition}
Let $\a$ and $\b$ be two closed curves in $F$ intersecting transversally. The Goldman bracket between $\a$ and $\b$, denoted by $[\a ,\b]$, is defined  to be $$[\a ,\b]=\sum_{p\in\a\cap\b}\epsilon_p\,\,(\widetilde{\a *_p\b})$$  where $\epsilon_p$ denotes the sign of the intersection between $\a$ and $\b$ at $p$ and $(\a *_p\b)$ denotes the loop product of $\a$ and $\b$ at $p.$ If $\a$ and $\b$ are disjoint then we define the bracket to be $0$.
\end{definition}
Note that the bracket operation is skew-symmetric, i.e. $[\a, \b]=-[\b, \a]$. Goldmann \cite{Gol} proved the following properties of the bracket operation.\\
1) It is well defined on the set of all free homotopy classes of closed curves, i.e. if $\a_1$ is freely homotopic to $\a_2$ and $\b_1$ is freely homotopic to $\b_2$ then $[\a_1 ,\b_1]=[\a_2 ,\b_2].$ \\ 
2) The bracket operation satisfies Jacobi identity, i.e. if $\a, \b ,\g$ are three closed curves which intersects with each other transversally then $$[\a ,[\b ,\g]]+[\b ,[\g ,\a]]+[\g ,[\a ,\b]]=0.$$
Let $\mathcal{C}$ be the set of all free homotopy classes of closed curves in $F$ and $\Z(\mathcal{C})$ be the free module generated by $\mathcal{C}$. Extend the bracket operation linearly to $\Z(\mathcal{C}).$ By the above properties the bracket is a Lie bracket on $\Z(\mathcal{C})$. We denote this Lie algebra by $\mathcal{L}(F)$.

If $\a$ is a closed curve freely homotopic to a puncture or to a point then for any closed curve $\b$, we find a closed curve $\widehat{\b}$ which is freely homotopic to $\b$ and disjoint from $\a$. Hence by the first property, $[\a,\b]=0.$ Therefore the Goldman bracket between a non-essential closed curve and any other closed curve is 0.
\section{Terms of Goldman bracket}
In this section we  describe the lifts of the terms of Goldman bracket and using the description we give an algebraic definition of Goldman bracket.

Let $\a$ and $\b$ be two essential geodesics and $p$ be an intersection point between $\a$ and $\beta$. Let $P$ be the lift of $p$ in $A_{\a}$ lying in the interval $I(\a,\b)$. There exists a unique $g\in G$ such that $P=A_{\a}\cap gA_{\b}=A_{\a}\cap A_{\b^g}$ and the angle between $A_{\a}$ and $gA_{\b}$ at $P$ in their positive direction is same as the angle at $p$ between the arcs $(\a*_p\b)|_{[0,\frac{1}{4}]}$ and $(\a*_p\b)|_{[\frac{1}{2},\frac{3}{4}]}$. A lift of $(\a*_p\b)$ passing through $P$ is a bi-infinite piecewise geodesic which has the following description. 

Let $h=\a\b^g$ and $\g:[0,1]\rightarrow \H$ be the concatenation of the geodesic arc from $P$ to $\a(P)$ and the geodesic arc from $\a(P)$ to $h(P)$ (Figure \ref{zigzag}). Define $\g(\a,\b^g):\R\rightarrow \H$ to be the periodic extension of $\g$ to $\R$, i.e. $\g(\a,\b^g)(t)=\g(t)$ for $t\in [0,1]$ and $\g(\a,\b)(t+k)=h^k\g(\a,\b^g)(t)$ for all $k\in\Z$. If we denote the geodesic arc from $P$ to $\a(P)$ by $I_{\a}^0$ and the geodesic segment from $\a(P)$ to $h(P)$ by $I_{\b^g}^0$ then by the description above $\g(\a,\b^g)$ consists of geodesic segments of the form $h^k(I_{\a}^0)$ and $h^k(I_{\b^g}^0)$ occurring alternatively.

 Denote $h^k(I_{\a}^0)$ by $I_{\a}^k$ and $h^k(I_{\b^g}^0)$ by $I_{\b^g}^k$. From the definition, the  length of $I_{\a}^k$ is $\t_{\a}$ and the length of $I_{\b^g}^k$ is $\t_{\b}$ for all $k\in\Z$. Hence by the description of the axis of the product of two isometries given in section 2,  $A_h$  intersects $I_{\a}^k$ and $I_{\b^g}^k$ in their midpoint for all $k\in \Z$.

\begin{definition}
Let $K\geq 1$ be a positive real number. A piecewise smooth curve $\g:\R\rightarrow\H$ is called \textit{K-quasi-geodesic} if for all $t_1,t_2\in \R$,$$l(\g|_{[t_1,t_2]})\leq K d(\g(t_1),(t_2))$$ where $l(\g)$ denotes the length of $\g$ and $d$ is the hyperbolic metric in $\H$.
\end{definition}

\begin{lemma}\label{quasi1}
Given $L> 0,$ there exist $K\geq 1$ and $C>0$  such that if $\a$ and $\b$ are two intersecting essential geodesics with $\t_{\a}\leq L$ and $\t_{\b}\leq L$ then for any $g\in G$ if $A_{\a}$ and $A_{\b^g}$ intersect then:\\
1) $\g(\a,\b^g)$ is a $K$-quasi-geodesic. \\
2) $\g(\a,\b^g)\subset N_{C/2}(A_h)$ and $A_h\subset N_{C/2}( \g(\a,\b^g))$, where $h=\a\b^g$ and $N_C(A_{h})$ denotes the $C$ neighbourhood of $A_{h}$.\\
3) $\g(\a,\b^g)$ is homotopic to $A_{h}.$
\end{lemma}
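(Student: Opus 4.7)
The plan is to derive all three parts from the structural description of $A_h$ in Section~2 together with the $h$-equivariance of both $\g(\a,\b^g)$ and $A_h$. Two standing quantitative inputs will be used throughout: each geodesic segment $I_\a^k$ or $I_{\b^g}^k$ has length at most $L$ and its midpoint lies on $A_h$; and by the triangle inequality applied to the axis construction in Section~2, $\t_h\leq\t_\a+\t_\b\leq 2L$.

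Parts~(2) and~(3) follow quickly. For~(2), take $C=L$: the inclusion $\g(\a,\b^g)\subset N_{C/2}(A_h)$ holds because each segment has length at most $L$ and its midpoint is on $A_h$, so every point is within $L/2$ of $A_h$; conversely, consecutive midpoints on $A_h$ are spaced $\t_h/2\leq L$ apart, so every point of $A_h$ lies within $L/2$ of some midpoint on $\g(\a,\b^g)$. For~(3), the bounded Hausdorff distance from~(2) forces $\g(\a,\b^g)$ and $A_h$ to have the same pair of ideal endpoints on $\partial\H$, namely the fixed points of $h$; the $h$-equivariant homotopy flowing each $\g(t)$ along the geodesic segment to its nearest-point projection on $A_h$ then descends to the required free homotopy in $F$ between $\a*_p\b$ and its geodesic representative.

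For~(1), the main technical step, I would use the nearest-point projection $\pi:\H\to A_h$, which is $1$-Lipschitz. After arc-length parametrizing $\g(\a,\b^g)$, the composition $\pi\circ\g$ is $h$-equivariant and advances by exactly $\t_h$ over each arc-length period $\t_\a+\t_\b$; by~(2), its oscillation within one period is bounded by a constant $M=M(L)$. Using the systole lower bound $\t_h\geq\mu>0$ (valid because $h$ is hyperbolic and $G$ is discrete), this yields
\[
d(\g(t_1),\g(t_2))\;\geq\;d(\pi(\g(t_1)),\pi(\g(t_2)))\;\geq\;\frac{\t_h}{\t_\a+\t_\b}(t_2-t_1)-M',
\]
for some $M'=M'(L,\mu)$, while $l(\g|_{[t_1,t_2]})=t_2-t_1$. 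Combined with a separate estimate on short intervals containing at most one corner (by the hyperbolic law of cosines), this gives a uniform quasi-geodesic constant $K=K(L,\mu)$. The main obstacle is precisely this uniform assembly: controlling the within-period oscillation $M$ and handling $l/d$ on short intervals where the deflection angle at a corner may be close to $\pi$, so that the systole-driven long-range estimate is allowed to dominate.
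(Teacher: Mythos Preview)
Your arguments for parts (2) and (3) are correct and in fact more explicit than the paper's: the paper simply cites \cite[Lemma~7.2]{GC} for~(2), whereas you extract the concrete constant $C=L$ directly from the midpoint--on--axis description in Section~2 and the triangle inequality $\t_h\leq\t_\a+\t_\b$. That is a genuine simplification.

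For part~(1), however, there is a real gap. The paper's definition of $K$-quasi-geodesic is purely multiplicative, $l(\g|_{[t_1,t_2]})\leq K\,d(\g(t_1),\g(t_2))$ with no additive constant, and this is exactly what is used downstream in Lemma~\ref{quasi2}. Your projection-to-$A_h$ argument, driven by the systole bound $\t_h\geq\mu$, produces only an estimate of the form $d\geq \lambda\,l - M'$; that is a $(K,C')$-quasi-geodesic, not a $K$-quasi-geodesic. The obstacle you name---short intervals straddling a single corner where the deflection may be close to $\pi$---is not a technicality you can absorb into the long-range estimate: on such an interval of arc-length $2\epsilon$ the chord has length comparable to $2\epsilon\sin(\theta/2)$, so $l/d$ blows up as the corner angle $\theta\to 0$, uniformly in $\epsilon$. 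The systole bound on $\t_h$ does \emph{not} by itself prevent $\theta$ from being small (for instance when $\t_\a$ and $\t_\b$ differ by more than $\mu$, the constraint $d(P,h(P))\geq\mu$ is compatible with $\theta$ arbitrarily small).

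The paper closes this gap by a different mechanism: Lemma~\ref{close} gives, for each $L$, a lower bound $\delta>0$ on the angle of intersection between $A_\a$ and any $A_{\b^g}$ with $\t_\a,\t_\b\leq L$; this forces the corner angle $\theta=\pi-\phi$ into a compact subinterval of $(0,\pi)$, after which a convexity argument reduces to one period and Toponogov comparison yields an explicit $K=K(\theta)$. If you insert this angle bound into your short-interval law-of-cosines step, your outline can be completed, but at that point the essential input is the same as the paper's.
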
 

 \begin{figure}[h]
  \centering
    \includegraphics[trim = 80mm 12mm 10mm 40mm, clip, width=13cm]{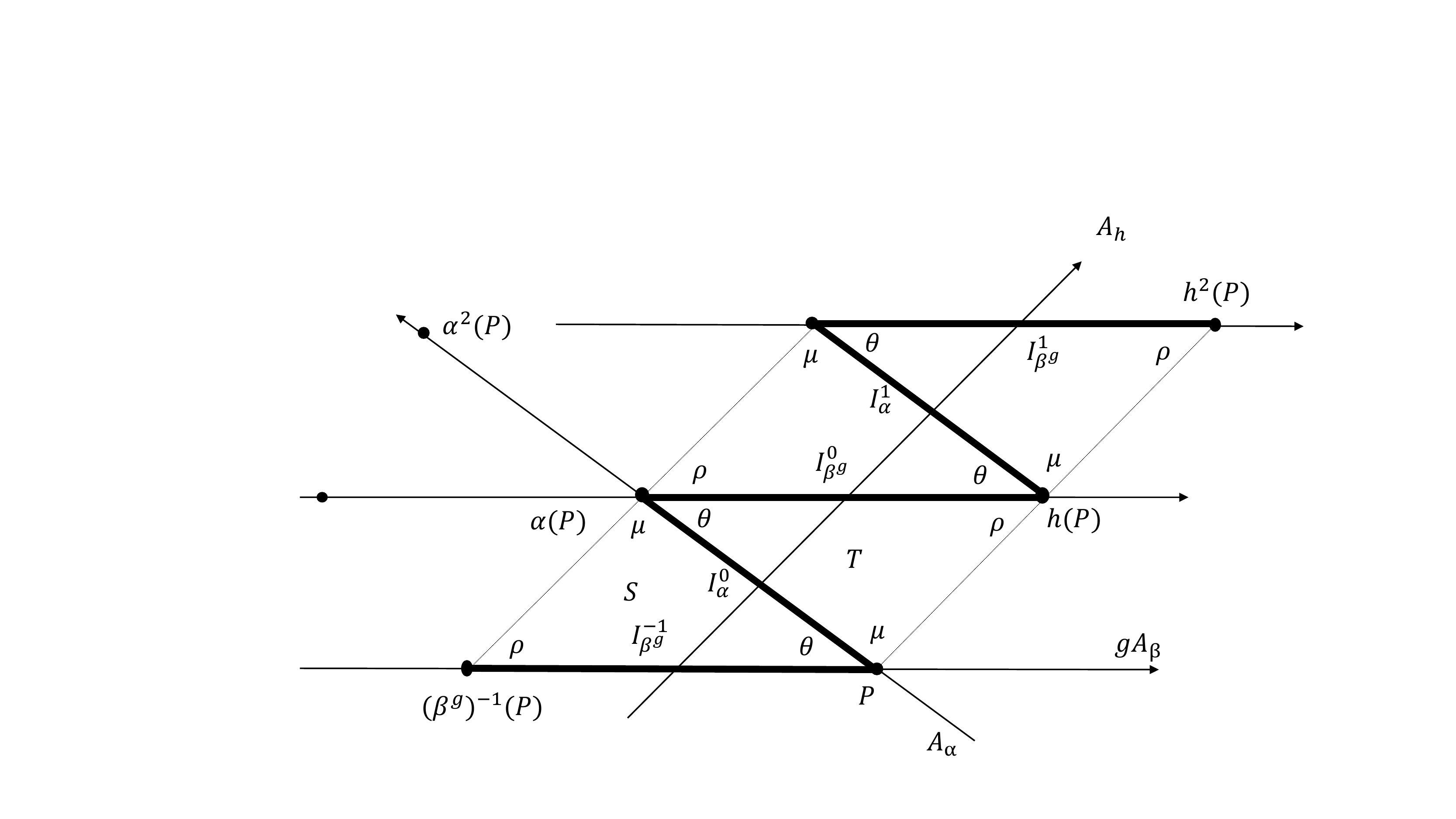}
     \caption{Lift of a term in Goldman bracket.}\label{zigzag}
\end{figure}

\begin{proof}
By Lemma \ref{close}, given $L>0$, there exists a $\delta>0$ such that the angle of intersection between $A_{\a}$ and $A_{\b^g}$ is bounded below by $\delta$ for all $g\in G$. Hence it is sufficient to prove that $\g(\a,\b^g)$ is a $K$-quasi-geodesic where $K$ depends only on the lower bound of the angle of intersection between $A_{\a}$ and $A_{\b^g}$. By the same argument it is enough to find a $C$ which depends on the lower bound of the angle of intersection.  

1) Let $T$ be the triangle with vertices $P,$ $\a(P)$ and $h(P)$ and $S$ be the triangle with vertices $P$, $\a(P)$ and $(\b^g)^{-1}(P)$ (Figure \ref{zigzag}). Let $\mu$, $\theta$ and $\rho$ be the angles of $T$ at $P$, $\a(P)$ and $h(P)$ respectively. Consider the polygonal region $\Delta=\cup_{k\in \Z}h^k(T\cup S)$. At each vertex of $\Delta$ the internal angles are $\mu$, $\theta$ and $\rho$ and their sum is at most $\pi.$ Hence $\Delta$ is convex. If $r$ and $s$ are any two points in $\g(\a,\b^g)$ then by the convexity of $\Delta$, the geodesic joining them lies inside $\Delta$. Hence it is enough to prove that $\g=\g(\a,\b^g)|_{[0,1]}$ is a quasi geodesic. By Toponogov comparison theorem \cite{Be}, $\g$ is a $K$ quasi geodesic where  
$$K=K(\theta) =
  \begin{cases}
   \frac{1}{\sin\theta}+\frac{1}{\tan\theta}+1 & \text{if } \theta\in(0,\pi /2)  \\
   \frac{1}{\sin\theta}+1       & \text{if } \theta\in[\pi/2,\pi).
  \end{cases} $$ Also from the definition of $K(\theta)$ we have that $K$ depends only on the lower bound of $\theta$.  
  
 2) See \cite[Lemma 7.2]{GC}.
 
 3) It follows from 1) and 2).
\end{proof}
The constants $C$ and $K$ mentioned above only depends on the lower bound of the angle of intersection between $A_{\a}$ and $A_{\b^g}$. Hence if we replace $\a$ by $\a^m$ then the same theorem holds for all $m\in\mathbb{N}.$  

From Lemma \ref{quasi1}, we have the following alternative algebraic definition of Goldman bracket. Let $\widetilde{\a}$ and $\widetilde{\b}$ be two free homotopy classes of closed curves. If they are disjoint then define the bracket to be zero. If one of them is non-essential then define the bracket to be zero. If both of them are essential and intersects then let $\a$ and $\b$ be the unique geodesics belong to the class $\widetilde{\a}$ and $\widetilde{\b}$ respectively. $\widetilde{\a}$ and $\widetilde{\b}$ corresponds to conjugacy classes of  hyperbolic elements of $G$. Define $$[\widetilde{\a},\widetilde{\b}]=\sum_{gB\in J(\a,\b)}\epsilon(\a,\b^g)\la\a\b^g\ra$$ where $\epsilon(\a,\b^g)$ denotes the sign of the intersection between $A_{\a}$ and $gA_{\b}$ at their point of intersection.  

\section{Non-Cancellation lemma}
Denote the length of a curve $\a$ by $l(\a)$. The following lemma is from \cite[Lemma 7.3]{GC}. We are including the proof for notational convenience.
\begin{lemma}\label{quasi2} Let $L, K$ and $C$ be  as in the Lemma \ref{quasi1}. For hyperbolic elements $\a , \b\in G$ with $\t_{a}\leq L$ and $\t_{\b}\leq L$, let $m$ be a positive integer such that $m\t_{\a}> 6KC.$  Then there exists a neighbourhood $U$ containing an open segment $J$ of $I_{\a^m}^0$ such that $l(J)\geq m\t_{\a}-6KC$, $U\subset N_C(I_{\a^m}^0)$,  $\overline{U}\cap N_C(I_{\a^m}^k)=\emptyset$ for all $k\neq 0$ and $\overline{U}\cap N_C(I_{\b}^k)=\emptyset$ for all $k\in Z$.
\end{lemma}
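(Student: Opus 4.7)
The plan is to take $J$ to be the open middle subsegment of $I_{\a^m}^0$ obtained by removing a subsegment of length $3KC$ from each end, and to let $U$ be the open $C$-neighborhood of $J$ in $\H$. Since $m\t_\a > 6KC$, we have $l(J) = m\t_\a - 6KC > 0$, and the inclusion $U \subset N_C(I_{\a^m}^0)$ is immediate from $J \subset I_{\a^m}^0$.

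For the two disjointness claims I would invoke Lemma \ref{quasi1}, which applies with $\a^m$ in place of $\a$ by the observation following that lemma: the piecewise geodesic $\g(\a^m,\b)$ is a $K$-quasi-geodesic with the same constant $K$ (and lies within $N_C$ of its straight axis). The key consequence is: whenever $y$ and $z$ lie on $\g(\a^m,\b)$ with hyperbolic distance $d(y,z) < 2C$, the arc-length along $\g(\a^m,\b)$ between $y$ and $z$, which I denote $d_\g(y,z)$, satisfies $d_\g(y,z) \leq K\, d(y,z) < 2KC$.

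Now suppose for contradiction that some $x \in \overline{U}$ lies in $N_C(I_{\a^m}^k)$ for some $k \neq 0$, or in $N_C(I_\b^k)$ for some $k \in \Z$. Pick $y \in \overline{J} \subset I_{\a^m}^0$ and $z$ on the offending segment with $d(x,y) \leq C$ and $d(x,z) < C$; the triangle inequality gives $d(y,z) < 2C$, and hence $d_\g(y,z) < 2KC$ by the quasi-geodesic bound. On the other hand, $y$ lies in $\overline{J}$, so along $\g(\a^m,\b)$ it sits at arc-length at least $3KC$ from either endpoint of $I_{\a^m}^0$; and because $\g(\a^m,\b)$ is the alternating concatenation of the segments $I_{\a^m}^k$ and $I_\b^k$ described in Section 5, any sub-path of $\g(\a^m,\b)$ from $y$ to a point outside $I_{\a^m}^0$ must exit $I_{\a^m}^0$ through one of its endpoints. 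Thus $d_\g(y,z) \geq 3KC$, contradicting $d_\g(y,z) < 2KC$ since $K,C > 0$, and the two disjointness statements follow.

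I do not expect any substantive obstacle here: the proof is essentially a bookkeeping exercise in applying the quasi-geodesic bound from Lemma \ref{quasi1}. The only mild subtlety is making sure that the arc-length lower bound $d_\g(y,z) \geq 3KC$ really holds, but this is forced by the simple alternating structure of $\g(\a^m,\b)$, and the cushion of $3KC > 2KC$ on each side of $J$ is precisely what makes the contradiction go through.
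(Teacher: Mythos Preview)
Your proof is correct and follows essentially the same route as the paper: remove a subsegment of length $3KC$ from each end of $I_{\a^m}^0$ to form $J$, then use the $K$-quasi-geodesic inequality to force the arc-length contradiction $3KC \le d_\g(y,z) < 2KC$. The only cosmetic difference is that the paper takes $U$ to be the portion of $N_C(I_{\a^m}^0)$ lying between the geodesics perpendicular to $I_{\a^m}^0$ at the endpoints of $J$, rather than $N_C(J)$, but both choices give a point of $\overline{J}$ within distance $C$ of any point of $\overline{U}$, and the rest of the argument is identical.
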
 
\begin{proof}
Let $S$ and $R$ be points in $I_{\a^m}^0$ at distance $3KC$ from $P$ and $\a^mP$ (Figure \ref{u}). Let $A_s$ and $A_r$ be the geodesics perpendicular to $I^0_{\a^m}$ and passing through $S$ and $R$ respectively and $U$ be the subset of $N_C(I^0_{\a^m}) $ lying between $A_s$ and $A_r$. Let $J$ be the segment of $I^0_{\a^m}$ from $S$ to $R$. As $m\t_{\a}>6KC$, $J\neq\emptyset$ and $l(J)\geq m\t_{\a}-6KC$. 

Let $Z\in\overline{U}$ and let $Q\in N_C(I_{\b}^k)$  for some $k\in\Z.$ Let $Z_1$ be a point in $J$ whose distance from $Z$ is smaller than $C$ and $Q_1$ be a point in $I_{\b}^k$ whose distance from $Q$ is smaller than $C$. As $\g(\a^m,\b)$ is a $K$-quasi-geodesic, we have $$d(Q,Z)\geq d(Q_1,Z_1)-2C\geq d(P,S)/K-2C\geq C.$$ Hence $\overline{U}\cap N_C(I_{\b}^k)=\emptyset$ for all $k\in \Z$. The proof for $Q\in N_C(I_{\a^m}^k)$ is similar.  
\end{proof}

 \begin{figure}[h]
  \centering
    \includegraphics[trim = 80mm 45mm 80mm 90mm, clip, width=13cm]{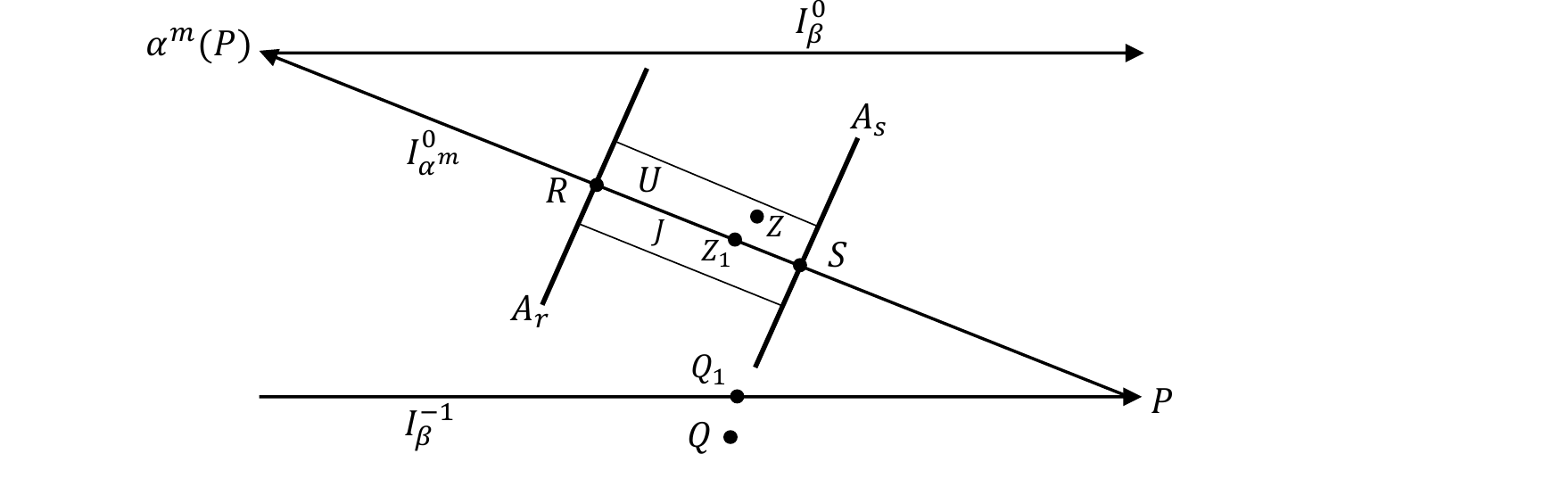}
     \caption{}\label{u}
\end{figure}

\begin{lemma}\label{nonca} Let $\a$ be a simple closed geodesic and $\a_1=(\a)^h$ for some $h\in G$. Let $\b$ and $\b_1$ be two distinct closed geodesics  whose axes are distinct and intersects the axis of $\a$. Let $L$ be a given positive number such that the  translation lengths of $\a,\b$ and $\b_1$ are bounded above by $L$. Then there exists $m_0$ such that for any $m> m_0$, $\g(\a^m,\b)=\g(\a_1^m,\b_1)$ whenever $\a^m\b=\a_1^m\b_1$. Moreover there exists  $u\in G$ such that $\a_1=\a^u$ and $\b_1=\b^u.$
\end{lemma}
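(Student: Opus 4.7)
Let $h := \a^m\b = \a_1^m\b_1$. By Lemma \ref{quasi1} both zigzags $\g(\a^m,\b)$ and $\g(\a_1^m,\b_1)$ are $K$-quasi-geodesics homotopic to the common axis $A_h$, each contained in $N_{C/2}(A_h)$, with $A_h$ in turn lying in the $C/2$-neighborhood of each. By the triangle inequality the two zigzags have mutual Hausdorff distance at most $C$; in particular every point of $I_{\a^m}^0\subset A_\a$ lies within $C$ of some point of $\g(\a_1^m,\b_1)$.

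Choose $m_0$ large enough that $m\t_\a>6KC$ and Lemma \ref{quasi2} applies to both zigzags. This produces, for each $\a_1^m$-piece $I_{\a_1^m}^j$, a neighborhood $U_j$ of a long subsegment $J_j$ of length at least $m\t_\a-6KC$ with $\overline{U_j}$ disjoint from the $C$-neighborhood of every other segment of $\g(\a_1^m,\b_1)$; analogously a $U\supset J\subset I_{\a^m}^0$ for $\g(\a^m,\b)$. The axis-of-product formula of section 2 forces $\t_h\geq m\t_\a-L$, so for $m$ large two distinct $J_j,J_{j'}$ cannot both lie within $C$ of the same long segment $J$: any such pairing would produce a point of $\overline{U_j}\cap N_C(I_{\a_1^m}^{j'})$, contradicting Lemma \ref{quasi2}, while the short $\b_1$-pieces (length at most $L$) can only account for a bounded-length portion of the matching. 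Hence there is a unique index $j_0$ such that $J$ lies within distance $C$ of $J_{j_0}$, and $I_{\a^m}^0\subset A_\a$ and $I_{\a_1^m}^{j_0}\subset h^{j_0}A_{\a_1}$ are two long, nearly parallel geodesic arcs passing close to one another. Applied to $\a$ and $h^{j_0}\a_1 h^{-j_0}$ (both hyperbolic with translation length $\t_\a\leq L$), Lemma \ref{close} forces $A_\a=h^{j_0}A_{\a_1}$; since both elements translate $A_\a$ by $\t_\a$ in the same positive direction, they coincide, yielding $\a_1=\a^u$ with $u:=h^{-j_0}$.

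Because $u$ is a power of $h$ it commutes with $h$, so conjugating $\a^m\b=h$ by $u$ gives $\a_1^m\cdot u\b u^{-1}=uhu^{-1}=h=\a_1^m\b_1$, hence $\b_1=\b^u$. The zigzag $\g(\a_1^m,\b_1)$ is by construction the $u$-image of $\g(\a^m,\b)$, and since $u\in\la h\ra$ preserves $\g(\a^m,\b)$ setwise via the $h$-periodicity, the two zigzags coincide as subsets of $\H$. The main obstacle is the matching step in the second paragraph: proving the uniqueness of $j_0$ requires combining the quantitative disjoint-neighborhood property of Lemma \ref{quasi2} applied simultaneously to both zigzags with the lower bound on $\t_h$, and implicitly uses the simplicity of $\a$, which ensures that the translates $h^jA_{\a_1}$, being lifts of the simple closed geodesic $\a$, either coincide or are disjoint, so that the axis identification $A_\a=h^{j_0}A_{\a_1}$ coming from Lemma \ref{close} is unambiguous.
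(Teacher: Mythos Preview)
Your overall strategy---pin down a long $\a_1^m$-segment of $\second$ that runs close to $I_{\a^m}^0$, deduce that the underlying axes coincide, and then read off $\a_1=\a^u$, $\b_1=\b^u$ with $u$ a power of $h$---is correct and is essentially what the paper does. The concluding algebra (using that $u\in\la h\ra$ commutes with $h$, hence $\b_1=\b^u$ and $u\,\first=\second$) is clean and valid.

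The gap is in the matching step. From the mutual $C$-closeness you only know that each point of $J$ is within $C$ of \emph{some} point of $\second$; you have not shown that a single piece $I_{\a_1^m}^{j_0}$ contains a sub-arc of length exceeding the constant $M$ of Corollary~\ref{nbd} that stays in $N_C(A_\a)$. Your uniqueness argument (``a pairing would put a point of $\overline{U_j}$ into $N_C(I_{\a_1^m}^{j'})$'') does not follow: closeness of $J$ to both $J_j$ and $J_{j'}$ gives $2C$-closeness of $J_j$ to $J_{j'}$, not $C$-closeness, so Lemma~\ref{quasi2} does not apply. Likewise, invoking Lemma~\ref{close} directly on ``two long, nearly parallel arcs'' is imprecise: that lemma needs $\delta$-closeness for the specific $\delta=\delta(L)$, not $C$-closeness; the correct tool here is Corollary~\ref{nbd}, which says that if a segment of one axis of length $>M$ lies in $N_C$ of another, the axes coincide.

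You have also misplaced the role of simplicity. In the paper the simplicity of $\a$ is \emph{not} used to disambiguate the index $j_0$; it is used to exclude the possibility that the long straight piece of $\second$ lying on $A_\a$ is a $\b_1$-segment. Concretely: after a connectedness argument shows that $\second$ crosses the tube $U$ lengthwise, a case analysis produces a straight sub-segment $\nu$ of length $>M$ inside $N_C(I_{\a^m}^0)$, and Corollary~\ref{nbd} forces $\nu\subset A_\a$. If $\nu$ were contained in some $I_{\b_1}^k$, then the adjacent segment $I_{\a_1^m}^{k+1}$, which lies on a translate of $A_\a$, would meet $A_\a$ transversally at the common vertex---impossible since lifts of a simple geodesic are disjoint. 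Hence $\nu\subset I_{\a_1^m}^k$ for some $k$. Finally, the paper replaces your ``same axis, same translation length, same direction'' step by the observation that $I_{\a^m}^0$ and $I_{\a_1^m}^k$ lie on the same line, have equal length $m\t_\a$, and are both bisected by $A_h$; hence they are the \emph{same} segment, which immediately gives $I_{\b}^0=I_{\b_1}^k$ and the equality of the zigzags.
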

\begin{proof}
 Let $\t_0$ be the systole, i.e. the length of a shortest length geodesic of $F.$
 
Let $C,K,M$ be the constants defined in Lemma \ref{quasi1} and Corollary \ref{nbd} respectively and $m>m_0$. Define $m_0=K(3M+10C)/\t_0.$

Since $\a^m\b=\a_1^m\b_1$, $A_{\a^m\b}=A_{\a_1^m\b_1}.$ Let $g=\a^m\b=\a_1^m\b_1.$ By Lemma \ref{quasi2},
\begin{equation}
\g(\a_1^m,\b_1)\subset N_{C/2}(A_g)\subset N_C(\g(\a^m,\b)).
\end{equation}

 Let $J$ and $U$ are as mentioned in Lemma \ref{quasi2} corresponding to $\g(\a^m,\b).$ Therefore $J\subset U,$ $J\subset I_{\a^m}^0\subset \g(\a^m,\b)$ and length$(J)\geq m\t_{\a}-6KC.$
 
\noindent\textbf{Claim 1:} $\g(\a_1^m,\b_1)$ intersects $U$ and does not intersect the boundary of $U$  contained in the boundary of $N_C(\g(\a^m,\b)).$

If $\g(\a_1^m,\b_1)$ does not intersect $U$, by $(1)$, $\second $ is contained in $N_C(\first)\setminus U=N_C(\first\setminus J)$ which is disconnected. Hence by $(1)$, $\second $ should intersect both components which contradicts that $\second $ is connected. Since $\second \subset N_C(\first )$, it does not intersect the boundary of $N_C(\first).$

Therefore the component of $U\cap\second$ consists of piecewise geodesic arcs starting and ending at the sides of $U$ of length $2C.$

\noindent\textbf{Claim 2:} Let $\mu$ be the component of $\second$ intersecting $U$. Then $\mu$ contains a geodesic segment $l$ of length greater than M.

\underline{Case-1:} Suppose $\mu$ contains more than three vertices. Then $\mu$ contains $I_{\a_{1}^m}^k$ for some $k\in \Z$ and length$(I_{\a_{1}^m}^k)=m\t_{\a_1}>m_0\t_{0}=K(3M+10KC)>M.$

\underline{Case-2:} Suppose $\mu$ contains at most three vertices. Then $\mu$ consists of at most three segments. Let $\nu$ be the longest segment with $l(\nu)=r.$ By hypothesis, $l(J)\geq m\t_{\a}-6KC>3KM+4KC$. Using triangle inequality and the properties of $m$ and $K$, we have $$(3M+4C)\leq K(3M+4C)< m\t_{\a}-6KC\leq 2C+3r+2C-6KC\leq 3r+4C.$$ Hence $r> M$ which proves claim 2. 

The geodesic segment $\nu$ is contained in $\second$ and $\nu\subset N_C(I_{\a^m}^0)$. Therefore by Corollary \ref{nbd}, $\nu$ intersects $I_{\a^m}^0$ in a geodesic segment. Hence $I_{\a^m}^0$ and $\second$ intersects in a geodesic segment.

\noindent\textbf{Claim 3:} If $\first$ and $\second$ intersects in a geodesic segment contained in $I_{\a^m}^0$ then they are equal, $\a_1=\a^u$ and $\b_1=\b^u$ for some $u\in G.$

Observe that for all $k\in\Z$, $l(I_{\a_1^m}^k)=m\t_{\a_1}=m\t_{\a}=l(I_{\a^m}^0)$ and $A_g$ intersects $I_{\a^m}^0$, $I_{\a_1^m}^k$ in their midpoints. Hence if $I_{\a^m}^0$ intersects $I_{\a_1^m}^k$ in a geodesic segment then they are equal. 

 If $I_{\a^m}^0$ intersects $I_{\b_1}^k$ in a geodesic segment for some $k\in\Z$, then by the construction of $\first$, $A_{\a}$ intersects $I_{\a_1^m}^{k+1}$ which lies in a translate of the geodesic $A_{\a_1}$ and hence in a translate of $A_{\a}$ (as $\a$ and $\a_1$ are conjugates). As $\a$ is simple, all translates of $A_{\a}$ are disjoint. Hence $I_{\a^m}^0$ cannot intersect $I_{\b_1}^k$ for any $k\in\Z$.
 
 \begin{figure}[h]
  \centering
    \includegraphics[trim = 10mm 35mm 0mm 50mm, clip, width=13.5cm]{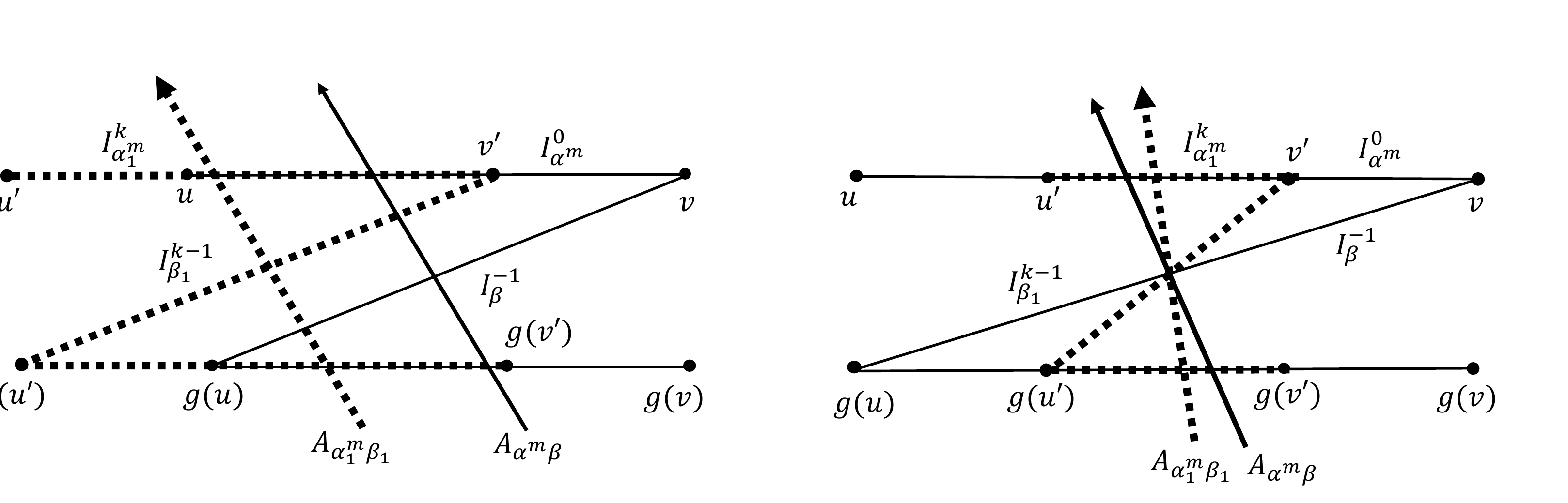}
     \caption{Possible intersections between $\first$ and $\second$}\label{share}
\end{figure}
 
 By Claim 3, $I_{\a^m}^0$ intersects $\second$ in a geodesic segment. Hence $I_{\a^m}^0$ intersects either  $I_{\a_1^m}^k$ or      
$I_{\b_1}^k$ in a geodesic segment. Since  $I_{\a^m}^0$ can not intersect $I_{\b_1}^k$ in a geodesic segment, $I_{\a^m}^0$ intersects $I_{\a_1^m}^k$ for some $k\in\Z$ and they are equal. Hence $I_{\a^m}^0=I_{\a_1^m}^k$. Since $I_{\b}^0$ and $I_{\b_1}^k$ are the unique geodesic segments joining the end point of  $I_{\a^m}^0=I_{\a_1^m}^k$ with the image of the starting point of $I_{\a^m}^0=I_{\a_1^m}^k$ under $g$, $I_{\b}^0=I_{\b_1}^k$. By the periodic property of the definition of $\first$ and $\second$ they are equal. Since $g^nI_{\a^m}^0=I_{\a_1^m}^0$ and  $g^nI_{\b}^0=I_{\b_1}^0$ for some $n$ taking $u=g^n$ we have $\a_1=\a^u$ and $\b_1=\b^u$. 
 \end{proof}

\section{Center of Goldman bracket}

\begin{lemma}\label{nonca1}
Let $F$ be a hyperbolic surface. Suppose $\a$ is an essential simple closed curve and $\b$ is an essential closed curve. Then there exists $m_0$ such that for all $m> m_0$, $[\a^m,\b]\neq 0.$  
\end{lemma}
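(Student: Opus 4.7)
The plan is to show that once $m$ is large enough no cancellation can occur among the terms of $[\a^m,\b]$, so the bracket is nonzero as soon as its index set is nonempty.

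First I pass to the geodesic representatives of $\a$ and $\b$. If $A_{\a}$ and $A_{\b}$ are disjoint then $[\a^m,\b]=0$ for all $m$; in the applications in Section~7 the simple curve $\a$ is chosen to meet $\b$ essentially, so I may assume $A_{\a}\cap A_{\b}\neq\emptyset$. Using the algebraic description from the end of Section~5,
\[
[\a^m,\b] \;=\; \sum_{gB\in J(\a^m,\b)} \epsilon(\a^m,\b^g)\,\la\a^m\b^g\ra,
\]
and $|J(\a^m,\b)|=m\cdot i(\a,\b)\geq m$. Put $L:=\max(\t_{\a},\t_{\b})$; this is the uniform length bound I will feed into Lemma~\ref{nonca}.

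The main step is a non-cancellation claim: for $m$ larger than the threshold $m_0$ that Lemma~\ref{nonca} produces from this $L$, any two cosets $g_1B,g_2B\in J(\a^m,\b)$ with $\la\a^m\b^{g_1}\ra=\la\a^m\b^{g_2}\ra$ satisfy $\epsilon(\a^m,\b^{g_1})=\epsilon(\a^m,\b^{g_2})$. Granted this, grouping by conjugacy class turns the sum into a $\Z$-linear combination of distinct free homotopy classes with nonzero coefficients, and at least one class appears, so $[\a^m,\b]\neq 0$.

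To prove non-cancellation I choose $w\in G$ with $w(\a^m\b^{g_1})w^{-1}=\a^m\b^{g_2}$, i.e.\ $(\a^w)^m\b^{wg_1}=\a^m\b^{g_2}$, and apply Lemma~\ref{nonca} with $\a_1:=\a^w$, with the lemma's $\b$ taken as $\b^{g_2}$ and $\b_1:=\b^{wg_1}$; both axes $wg_1A_{\b}$ and $g_2A_{\b}$ meet $A_{\a}$ since $g_iB\in J(\a^m,\b)$. The lemma yields $u\in G$ with $\a^w=\a^u$ and $\b^{wg_1}=\b^{ug_2}$. A simple closed curve on a hyperbolic surface represents a primitive element of $G$, so the centralizer of $\a$ is $\la\a\ra$, and the first equality forces $w=u\a^j$ for some $j\in\Z$; the second then gives $g_2^{-1}\a^j g_1$ in the centralizer of $\b$, which stabilises $A_{\b}$ setwise. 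Therefore $g_2A_{\b}=\a^j g_1A_{\b}$, and the intersection $A_{\a}\cap g_2A_{\b}$ is the $\a^j$-image of $A_{\a}\cap g_1A_{\b}$. Since $\a^j$ is an orientation-preserving isometry preserving the positive direction of $A_{\a}$ and transporting oriented translates of $A_{\b}$ to oriented translates, the two intersection signs coincide.

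The main obstacle is the degenerate case $A_{\b^{wg_1}}=A_{\b^{g_2}}$, where Lemma~\ref{nonca} cannot be invoked because its hypothesis demands distinct axes. I would treat it by hand: equality of axes together with the common translation length and direction forces $\b^{wg_1}=\b^{g_2}$ in $G$, hence $(\a^w)^m=\a^m$, so $w$ centralises $\a^m$ and, by primitivity of $\a$, lies in $\la\a\ra$; the sign equality then follows exactly as in the generic case. The only non-routine checks beyond this are the primitivity of simple closed curves in surface groups and the bookkeeping that confirms $\a^j$ preserves the intersection sign.
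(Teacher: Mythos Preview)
Your proof is correct and follows essentially the same route as the paper: apply Lemma~\ref{nonca} to conclude that whenever $\la\a^m\b^{g_1}\ra=\la\a^m\b^{g_2}\ra$ the two signs agree, so no cancellation occurs and the bracket is nonzero. Your sign argument via primitivity of $\a$ is correct but heavier than needed; the paper simply observes that the element $u$ produced by Lemma~\ref{nonca} satisfies $(\a^m)^u=(\a^m)^w$ and $(\b^{g_2})^u=\b^{wg_1}$, so conjugating first by $u$ and then by $w^{-1}$ carries the oriented pair $(A_{\a^m},A_{\b^{g_2}})$ to $(A_{\a^m},A_{\b^{g_1}})$, and since each conjugation is an orientation-preserving isometry of $\H$ the intersection signs coincide---no appeal to centralisers is required. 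On the other hand, your explicit handling of the degenerate case $A_{\b^{wg_1}}=A_{\b^{g_2}}$ is more careful than the paper, which invokes Lemma~\ref{nonca} without verifying its distinct-axes hypothesis.
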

\begin{proof}
Let $L=max\{\tau_\a,\tau_\b\}$ and $m_0$ be as in Lemma \ref{nonca}. If $m> m_0$ then $$[ \a^m,\b]=m\left(\sum_{kB\in J(\a^m,\b)}\epsilon(\a^m,\b^k){\la \a^m\b^k\ra}\right).$$ Suppose $\la \a^m\b^k\ra=\la \a^m\b^{k_1}\ra.$ Then for some $g\in G$ $$\a^m\b^k=(\a^m\b^{k_1})^g=(\a^m)^g(\b)^{k_1g}.$$ By Lemma \ref{nonca}, there exists $u\in G$ such that $\a$ is conjugate to $\a^g$  and $\b^k$ is conjugate to $\b^{k_1g}$ by the element $u$. Therefore $$\epsilon(\a^m,\b^k)=\epsilon((\a^{m})^u,(\b^{k})^u)=\epsilon(\a^{mg},\b^{k_1g})=\epsilon(\a^{m},\b^{k_1}).$$ Hence $[\a^m,\b]\neq 0.$   
   
\end{proof}  

The following theorem is a classical result.
\begin{theorem}\label{classical}
Let $F$ be a hyperbolic surface of finite type with geodesic boundary. Let $\g$ be a homotopically non-trivial closed curve whose geometric intersection number with any other non-trivial simple closed geodesic is zero. Then $\g$ is homotopic to boundary or puncture. 
\end{theorem}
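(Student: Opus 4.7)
The plan is to prove the contrapositive: if $\g$ is homotopically non-trivial and not homotopic to a boundary component or a puncture, then there is some essential simple closed geodesic $\delta$ with $i(\g,\delta)>0$. Under this hypothesis $\g$ is essential and not parabolic, so it has a unique closed geodesic representative, and I would replace $\g$ by this geodesic throughout.

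First I would fix a pants decomposition of $F$: a maximal collection of pairwise disjoint, pairwise non-homotopic, essential and non-peripheral simple closed geodesics $c_1,\dots,c_k$. This exists because $F$ has finite type and negative Euler characteristic. Each component of $F\setminus\bigcup_j c_j$ is a pair of pants whose boundary consists of curves among the $c_j$, boundary components of $F$, and loops around cusps. If $\g$ transversally meets some $c_j$, then since both curves are closed geodesics the number of crossings equals $i(\g,c_j)$, which is positive, so $\delta=c_j$ works. Otherwise $\g$ is disjoint from every $c_j$ and so lies in a single pair of pants $P$. The only non-trivial free homotopy classes of closed curves in $P$ are powers of its three boundary components, hence $\g$ is freely homotopic to $c^n$ for some boundary component $c$ of $P$ and some $n\neq 0$. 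If $c$ is a boundary component of $F$ or a loop around a puncture, $\g$ is peripheral, contradicting the hypothesis. Hence $c=c_j$ for some $j$.

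The third step is to produce an essential simple closed geodesic crossing $c_j$. For this I would use a standard surface-topology construction: in the union of the one or two pairs of pants adjacent to $c_j$, I can find an essential properly embedded arc with both endpoints on $c_j$ that is not boundary-parallel, and closing it up along a sub-arc of $c_j$ yields a simple closed curve whose geodesic representative $\delta$ satisfies $i(c_j,\delta)>0$. Since $\g$ is freely homotopic to $c_j^n$, this gives $i(\g,\delta)=|n|\,i(c_j,\delta)>0$ as required.

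The hard part will be this last step, namely using non-peripherality of $c_j$ to produce the desired arc. This hinges on the fact that the two sides of $c_j$ in $F$ cannot both be annuli, so at least one adjacent pair of pants admits an essential non-boundary-parallel arc with both endpoints on $c_j$. The remaining ingredients, namely the existence of a pants decomposition, the classification of closed curves in a pair of pants, and the fact that transverse intersections of closed geodesics realize the geometric intersection number, are all standard.
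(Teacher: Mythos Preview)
The paper does not prove this statement --- it is cited as a classical fact and used as a black box --- so there is no paper proof to compare with. Your overall strategy is natural, but the second step contains a real error. You assert that ``the only non-trivial free homotopy classes of closed curves in $P$ are powers of its three boundary components.'' This is false: $\pi_1(P)$ is free of rank two and has infinitely many conjugacy classes that are not powers of the peripheral elements $a$, $b$, $ab$; for instance $a^{2}b$ or the commutator $[a,b]$. The true statement is that every \emph{simple} closed curve in a pair of pants is boundary-parallel, but $\gamma$ need not be simple. Moreover such a non-peripheral class does have a geodesic representative lying in the interior of $P$ (because $P$ has geodesic boundary, hence is convex, so the axis of any hyperbolic element of $\pi_1(P)$ projects into $P$). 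Thus a closed geodesic in $F$ can be disjoint from every pants curve and still fail to be a boundary power in its pair of pants, and your argument does not cover this case.

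Repairing the proof requires an extra argument here. One approach: if $\gamma\subset P$ is not a power of the boundary $c_l$, then $\gamma$ must essentially cross the seam arc $s$ joining the other two boundaries of $P$ (since $P\setminus s$ is an annulus with core $c_l$); provided at least one end of $s$ lies on an interior pants curve $c_j$, one can extend $s$ across $c_j$ into the neighbouring pair of pants to build a simple closed curve $\delta$ in $F$, and then argue that $i(\gamma,\delta)>0$ via the bigon criterion or by passing to geodesic representatives. Carrying this out carefully --- and noting that the statement as written actually fails when $F$ is itself a thrice-holed sphere, and that the conclusion should really allow powers of peripheral curves --- is where the remaining work lies.
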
  

\begin{style}{Theorem 1}\label{final}
Let $F$ be a closed surface. Then the center of $\mathcal{L}(F)$ is trivial. 
\end{style} 

\begin{proof}
Let $\b=\sum_{i=1}^{n}c_i\b_i\in \mathcal{L}(F)$ where each $\b_i$ is a geodesic and $\b_i\neq \b_j$ for $i\neq j$. We show that if $\a$ is any simple closed curve which intersects at least one of the $\g_i$ then there exist $m>0$ such that $[\a^m,\sum_{i=1}^{n}c_i\b_i]$ is nonzero. Therefore if $\b$ belongs to the center of $\mathcal{L}(F)$ then each $\b_i$ is disjoint from every simple closed geodesic in $F$. Hence by Theorem \ref{classical}, each $\b_i$ is trivial and so is $\b$.

Let $\a$ be a simple closed geodesic which intersects atleast one $\b_i.$ If some $\b_k$ is disjoint from $\a$ then the Goldman bracket between $\a$ and $\b_k$ is zero, therefore without loss of generality  assume  that $\a$ intersects $\b_j$ for all $j\in\{1,2,\ldots ,n\}$. Let $L=max\{\tau_{\a_1},\tau_{\b_1},\tau_{\b_2},\ldots ,\tau_{\b_n}\}.$ Hence by Lemma \ref{nonca}, there exist $m_i$ for each $i\in\{1,2,\ldots ,n\}$ such that if $m> max\{m_i\}$ for all $i$ with $\a_1{^m}=(\a^m)^h$ for some $h\in G$ then $\gamma(\a^m,\b_i)=\gamma(\a_1^m,\b_j)$ whenever $\a^m\b_i=\a_1{^m}\b_j$. Also there exists  $g\in G$ such that $\a_1=\a^g$ and $\b_j=\b_i{^g}$. Since $[\a^m,\sum_{i=1}^{n}c_i\b_i]=\sum_{i=1}^{n}c_i[\a^m,\b_i]$, by Theorem \ref{nonca1}, it is enough to show that terms of  $[\a^m,\b_i]$ are distinct from the terms of $[\a^m,\b_j]$ for $i\neq j.$ 

Suppose  $\la \a^m\b_i^{k_i}\ra=\la \a^m\b_j^{k_j}\ra.$ Hence there exists $h\in G$ such that $$\a^m\b_i^{k_i}=(\a^m\b_j^{k_j})^h=(\a^m)^h(\b_j)^{k_jh}.$$ By Lemma \ref{nonca}, $\b_i^{k_i}$ and $\b_j^{k_j}$ are conjugates of each other in $G$. Therefore $\b_i$ and $\b_j$ are  freely homotopic to each other. Hence the geodesic representative corresponding to $\b_i$ and $\b_j$ are same, which contradicts the assumption.
\end{proof}

\begin{style}{Theorem 2}\label{final1}
Let $F$ be a hyperbolic surface of finite type with geodesic boundary. Then the center of $\mathcal{L}(F)$ is generated by the set of free homotopy classes of closed curves which are either homotopic to boundary or homotopic to puncture. 
\end{style}
\begin{proof}
Suppose $\g=\sum_{i=1}^nc_i\g_i$ belongs to the center of $\mathcal{L}(F)$. If the geometric intersection of any $\g_i$ with any simple closed curve $\a$ is non zero then by a similar argument as in Theorem 1 shows that $[\a,\g]\neq 0.$ Hence if $\g$ belongs to the center then the geometric intersection number of every $\g_i$ with any simple closed curve is zero. Thus by Theorem \ref{classical}, each $\g_i$  is homotopic to boundary or puncture.    
\end{proof}

\end{document}